\newtheorem{proposition}{Proposition}[section]
\newtheorem{theorem}[proposition]{Theorem}
\newtheorem{corollary}[proposition]{Corollary}
\def\l{{\langle}}
\def\r{\rangle}
\def\R{{\mathbb R}}
\def\S{{\mathbb S}}
\def\E{{\mathbb E}}
\def\P{{\mathbb P}}
\makeatletter \@addtoreset{equation}{section} \makeatother
\newcommand {\qed}%
{%
    {}\hfill
    {}\hfill
    {$\square $}%
    \vspace {0.3cm}%
    \pagebreak [2]%
    \par
}%
\newenvironment{proof}{%
	\vspace{0.3cm}%
	\pagebreak [2]%
	\par%
	\noindent {\bf  Proof.\  }}{\qed}%
\newenvironment{remark}{%
    \vspace{0.3cm} \pagebreak [2]%
    \par%
    \refstepcounter{proposition}
    \noindent%
    {\bf Remark~\theproposition\  }}{}%
\begin{document}

\title {On critical points of Gaussian random fields under diffeomorphic transformations}
\author{Dan Cheng\thanks{Research partially
		supported by NSF grant DMS-1902432.}\\ Arizona State University
 \and Armin Schwartzman\thanks{Research partially
 	supported by NSF grant DMS-1811659.} \\ University of California, San Diego}

\maketitle

\begin{abstract}
	Let $\{X(t), t\in M\}$ and $\{Z(t'), t'\in M'\}$ be smooth Gaussian random fields parameterized on Riemannian manifolds $M$ and $M'$, respectively, such that $X(t) = Z(f(t))$, where $f: M \to M'$ is a diffeomorphic transformation. We study the expected number and height distribution of the critical points of $X$ in connection with those of $Z$. As an important case, when $X$ is an anisotropic Gaussian random field, then we show that its expected number of critical points becomes proportional to that of an isotropic field $Z$, while the height distribution remains the same as that of $Z$. 
\end{abstract}

\noindent{\small{\bf Keywords}: Gaussian random fields; Diffeomorphic transformation; Critical points; Expected number; Height distribution; Anisotropic; Isotropic.}

\noindent{\small{\bf Mathematics Subject Classification}:\ 60G15, 60G60, 15B52.}

\section{Introduction}

The study of critical points of Gaussian random fields, especially of their expected number and height distribution, is important in probability theory \cite{AT07,CL67} and has applications in many areas such as physics \cite{Fyodorov04}, statistics \cite{CS17,CS19sphere}, neuroimaging \cite{Taylor:2007,Pantazis:2005} and astronomy \cite{AstrophysJ85}. However, the exact formulae for the expected number and height distribution of critical points of Gaussian fields are difficult of obtain. The only exception, so far to the authors' knowledge, is the isotropic Gaussian fields \cite{CS15,CS18}. In this paper, we investigate Gaussian fields under diffeomorphic transformations and find a new approach to obtain the exact formulae for a wider class of Gaussian fields.

An important case is the anisotropic Gaussian field, which is a useful model in spatial statistics \cite{Allard:2016}. Anisotropic fields also appear in smoothing of 3D brain images. While isotropic kernels are often used, it has been shown that in some situations anisotropic kernels yield better results \cite{VanHecke:2010}.

The standard approach for solving the expected number and height distribution of critical points of Gaussian random fields is the Kac-Rice formula \cite{AT07,Fyodorov04}. However, the problem becomes much simpler by realizing that an anisotropic field can be obtained by a linear transformation of the index parameter of an isotropic field; see Section \ref{sec:anisotropic} below. We present here an easy solution to the expected number and height distribution of critical points of anisotropic Gaussian fields by casting the problem more generally as a diffeomorphic transformation of the parameter space.

This approach has advantages for other applications where diffeomorphic transformations are used. Diffeomorphic transformations of random fields appear, for example, in structural and functional imaging of the cortical surface of the brain. Because the brain's cortex is a convoluted closed surface, it is often mapped onto the 2-dimensional sphere $\S^2$ \cite{Fischl:1999-II}, where statistical inference can be performed using Gaussian random field theory \cite{Hagler:2006,Pantazis:2005}. Thus, the study of topological features of quantities such as local maxima and local minima of cortical thickness and cortical activity can be carried out in a simpler representation on the sphere.

\section{Gaussian Random Fields under Diffeomorphic Transformations}
Let $\{X(t), t\in M\}$ be a smooth Gaussian random field parameterized on a piecewise $C^2$ compact Riemannian manifold $M$.  Here and in the sequel, the smoothness assumption means that the field satisfies the condition (11.3.1) in \cite{AT07}, which is slightly stronger than $C^2$ but can be implied by $C^3$. The number of critical points of index $i$ of $X$ exceeding level $u$ over the domain $M$ is defined as
\begin{equation}\label{eq:mu-i}
\mu_i^X(M, u) = \# \left\{ t\in M: X(t)\geq u, \nabla X(t)=0, \text{index} (\nabla^2 X(t))=i \right\}, \quad 0\le i\le N,
\end{equation}
where $\nabla X(t)$ and $\nabla^2 X(t)$ are respectively the gradient and Hessian of $X$, and $\text{index} (\nabla^2 X(t))$ denotes the number of negative eigenvalues of $\nabla^2 X(t)$. The expected number of critical points is therefore $\E[\mu_i^X(M, u)]$.

We introduce next the height distribution of a critical point of $X$. It is convenient to consider the tail probability, that is, the probability that the height of the critical point exceeds a fixed threshold at that point, conditioned on the event that the point is a critical point of $X$. Such conditional probability can be defined as
\begin{equation*}
F_i^X(t, u) = \P\{X(t)>u | t \text{ is a critical point of index $i$ of } X \}.
\end{equation*}
It is shown in \cite{CS15} that the height distribution of a critical point of index $i$ of $X$ at $t$ is 
\begin{equation}\label{eq:Palm distr Euclidean}
F_i^X(t, u) = \frac{\E\{|{\rm det} (\nabla^2 X(t))|\mathbbm{1}_{\{X(t)> u\}} \mathbbm{1}_{\{{\rm index}(\nabla^2 X(t))=i\}}|\nabla X(t)=0\}}{\E\{|{\rm det} (\nabla^2 X(t))|\mathbbm{1}_{\{{\rm index}(\nabla^2 X(t))=i\}} | \nabla X(t)=0\}}.
\end{equation}

A mapping $f: M \to M'$ is called $C^2$ diffeomorphic if it is one-to-one and $f$ and its inverse $f^{-1}$ are both twice differentiable. Suppose there exists $\{Z(t'), t'\in M'\}$, a smooth Gaussian random field parameterized on a Riemannian manifold $M'$, and a $C^2$ diffeomorphic transformation $f: M \to M'$, such that $X(t) = Z(f(t))$. Similarly to \eqref{eq:mu-i}, the number of critical points of index $i$ of $Z$ exceeding level $u$ over the domain $M'$ is defined as 
\begin{equation*}
\mu_i^Z(M', u) = \# \left\{ t'\in M': Z(t')\geq u, \nabla Z(t')=0, \text{index} (\nabla^2 Z(t'))=i \right\}, \quad 0\le i\le N;
\end{equation*}
and the height distribution of critical points of index $i$ of $Z$ at $t'$ is defined as $F_i^Z(t', u)$. 

We have the following main result. 

\begin{theorem}\label{thm:main}
	Let $\{X(t), t\in M\}$ and $\{Z(t'), t'\in M'\}$ be smooth Gaussian random fields parameterized on $N$-dimensional, piecewise $C^2$, compact Riemannian manifolds $M$ and $M'$, respectively, such that $X(t) = Z(f(t))$, where $f: M \to M'$ is a $C^2$ diffeomorphic transformation. Then for $0\le i\le N$, $t\in M$ is a critical point of index $i$ of $X$ if and only if $t'=f(t)\in M'$ is a critical point of index $i$ of $Z$; and moreover, for $u\in \R$, 
	\begin{equation}\label{eq:main}
	\E[\mu_i^X(M, u)] = \E[\mu_i^Z(M', u)] \quad {\rm and} \quad F_i^X(t, u) = F_i^Z(t', u).
	\end{equation}
\end{theorem}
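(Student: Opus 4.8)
The plan is to reduce the entire statement to two elementary facts from differential geometry --- the chain rule for differentials and the chain rule for Hessians \emph{at a critical point} --- together with Sylvester's law of inertia, and then to track how a single deterministic Jacobian factor propagates through the representation \eqref{eq:Palm distr Euclidean} of the height distribution.

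First I would establish the pointwise (sample-path) correspondence. Fix $t\in M$, write $t'=f(t)$, and choose orthonormal frames of $T_tM$ and $T_{t'}M'$ with respect to the two Riemannian metrics; let $J=J(t)$ be the matrix of $df_t$ in these frames, which is invertible since $f$ is a $C^2$ diffeomorphism. Differentiating $X=Z\circ f$ gives $dX_t=dZ_{t'}\circ df_t$, i.e. $\nabla X(t)=J^{\mathsf T}\nabla Z(t')$ in the chosen frames; invertibility of $J$ yields $\nabla X(t)=0\iff\nabla Z(t')=0$. At such a point the covariant Hessian coincides with the connection-independent Hessian (the connection term is $dX_t$ evaluated on a vector field, hence vanishes), which transforms naturally: $\nabla^2X(t)(v,w)=\nabla^2Z(t')(df_tv,df_tw)$, that is $\nabla^2X(t)=J^{\mathsf T}\,\nabla^2Z(t')\,J$ in matrix form. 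A congruence by the invertible matrix $J$ preserves the number of negative eigenvalues (Sylvester), so $\mathrm{index}(\nabla^2X(t))=\mathrm{index}(\nabla^2Z(t'))$; together with $X(t)=Z(t')$ this proves the ``if and only if'' claim and shows that, for every sample path and every $u$, the map $f$ restricts to a bijection between the defining set of $\mu_i^X(M,u)$ in \eqref{eq:mu-i} and that of $\mu_i^Z(M',u)$. Hence $\mu_i^X(M,u)=\mu_i^Z(M',u)$ as random variables, and taking expectations gives the first identity in \eqref{eq:main}.

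For the height distribution I would start from \eqref{eq:Palm distr Euclidean}. Since $\nabla X(t)=J^{\mathsf T}\nabla Z(t')$ with $J$ deterministic and invertible, the events $\{\nabla X(t)=0\}$ and $\{\nabla Z(t')=0\}$ are identical, and the Palm conditional law of $(X(t),\nabla^2X(t))$ given $\nabla X(t)=0$ agrees with its conditional law given $\nabla Z(t')=0$. On that event one may substitute $X(t)=Z(t')$ and $\nabla^2X(t)=J^{\mathsf T}\nabla^2Z(t')J$, so that $|\det\nabla^2X(t)|=(\det J)^2\,|\det\nabla^2Z(t')|$ and $\mathbbm{1}_{\{\mathrm{index}(\nabla^2X(t))=i\}}=\mathbbm{1}_{\{\mathrm{index}(\nabla^2Z(t'))=i\}}$. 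Both the numerator and the denominator of $F_i^X(t,u)$ therefore acquire the same constant factor $(\det J)^2$, which cancels, leaving precisely the numerator and denominator of $F_i^Z(t',u)$; this gives $F_i^X(t,u)=F_i^Z(t',u)$.

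The main difficulty I anticipate is bookkeeping rather than conceptual. One must be careful that the chosen frames are orthonormal for the respective metrics, so that ``$\mathrm{index}$'' and ``$\det$'' mean what they should; that the matrix $J$ relating the two gradients is literally the same matrix that conjugates the two Hessians; and that the Palm-type conditioning in \eqref{eq:Palm distr Euclidean} is genuinely invariant under replacing $\nabla Z(t')$ by its deterministic linear image $J^{\mathsf T}\nabla Z(t')$ --- which holds because the two conditioning events coincide and $J$ depends only on $t$, so the sole surviving factor is the $(\det J)^2$ produced by the Hessian substitution, and it cancels in the ratio. Finiteness of $\E[\mu_i^X(M,u)]$ and strict positivity of the denominator in \eqref{eq:Palm distr Euclidean} are inherited from the corresponding properties of $Z$ via the pathwise identity, so no separate regularity argument is required.
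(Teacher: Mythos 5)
Your proposal is correct and follows essentially the same route as the paper: chain rule giving $\nabla X(t)=B^{\mathsf T}\nabla Z(f(t))$ and $\nabla^2X(t)=B^{\mathsf T}\nabla^2Z(f(t))B$, Sylvester's law for the index, a pathwise bijection of critical points yielding equality of $\mu_i$ and hence of expectations, and cancellation of the $|\det B|^2$ factor in the ratio \eqref{eq:Palm distr Euclidean}. Your extra remark that the Hessian transformation rule is clean precisely because the connection (first-order) term vanishes at critical points is a welcome refinement of a step the paper states without comment, but it does not change the argument.
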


\begin{proof}
Denote by $T_t M$ and $T_{t'}M'$ the tangent spaces at $t\in M$ and $t'\in M'$, respectively. By the chain rule,
\begin{equation}\label{eq:chain}
dX|_t=dZ|_{f(t)}df|_t,
\end{equation}
where $dX|_t: T_tM\to \R$, $df|_t: T_tM\to T_{f(t)}M'$ and $dZ|_{f(t)}: T_{f(t)}M'\to \R$. We may write
\begin{equation*}
\begin{split}
dX|_t &= \sum_{i=1}^{N}\frac{\partial X}{\partial x_i} dx_i|_t,\\
dZ|_{f(t)} &= \sum_{i=1}^{N}\frac{\partial Z}{\partial x'_i} dx'_i|_{f(t)},
\end{split}
\end{equation*}
and $df|_t$ is a linear mapping that can be represented as an $N\times N$ matrix, denoted by $B=(B_{ij})_{1\le i,j\le N}$, such that
\begin{equation}\label{eq:df}
df|_t(v)=\sum_{i=1}^{N}\left(\sum_{j=1}^{N}B_{ij}v_j\right)\frac{\partial}{\partial x'_i}|_{f(t)}, \quad \forall v=\sum_{i=1}^{N}v_i\frac{\partial}{\partial x_i}|_t\in T_tM,
\end{equation}
where $\{\frac{\partial}{\partial x_i}|_t\}_{i=1}^n$ and $\{\frac{\partial}{\partial x'_i}|_{f(t)}\}_{i=1}^n$ are the bases in $T_tM$ and $T_{f(t)}M'$, respectively. Let $e_i|_t=\frac{\partial}{\partial x_i}|_t$. By \eqref{eq:chain} and \eqref{eq:df}, we have
\begin{equation*}
\begin{split}
\frac{\partial X}{\partial x_i}|_t&=\l dX|_t, \frac{\partial }{\partial x_i}|_t\r = dX|_t(\frac{\partial }{\partial x_i}|_t) = dX|_t(e_i|_t)\\
&=dZ|_{f(t)}df|_t(e_i|_t) = dZ|_{f(t)}\left(\sum_{k=1}^{N}B_{ki}\frac{\partial }{\partial x'_k}|_{f(t)}\right) = \sum_{k=1}^{N}B_{ki}\frac{\partial Z}{\partial x'_k}|_{f(t)}.
\end{split}
\end{equation*}
Therefore, 
\[
\nabla X|_t = (\frac{\partial X}{\partial x_1}, \ldots, \frac{\partial X}{\partial x_N})^T = B^T(\frac{\partial Z}{\partial x'_1}, \ldots, \frac{\partial Z}{\partial x'_N})^T = B^T\nabla Z|_{f(t)},
\]
implying 
\begin{equation}\label{eq:gradient}
\nabla X|_t=0 \quad \Leftrightarrow \quad \nabla Z|_{f(t)}=0.
\end{equation}
Similarly, we have 
\begin{equation*}
\begin{split}
\frac{\partial }{\partial x_j}(\frac{\partial X}{\partial x_i})|_t = \sum_{l=1}^{N}\sum_{k=1}^{N}B_{ki}\frac{\partial }{\partial x'_l}(\frac{\partial Z}{\partial x'_k})|_{f(t)}B_{lj}.
\end{split}
\end{equation*}
Therefore
\begin{equation}\label{eq:B}
\nabla^2 X|_t = B^T \nabla^2 Z|_{f(t)} B,
\end{equation}
implying 
\begin{equation}\label{eq:Hessian}
{\rm index}(\nabla^2 X|_t) = {\rm index}(\nabla^2 Z|_{f(t)}).
\end{equation}
It follows from \eqref{eq:gradient} and \eqref{eq:Hessian} that $t\in M$ is a critical point of index $i$ of $X$ if and only if $t'=f(t)\in M'$ is a critical point of index $i$ of $Z$.

It follows from \eqref{eq:mu-i}, \eqref{eq:gradient} and \eqref{eq:Hessian} that
\begin{equation*}
\begin{split}
\mu_i^X(M, u) &= \# \left\{ t\in M: X(t)\geq u, \nabla X(t)=0, \text{index} (\nabla^2 X(t))=i \right\}\\
&= \# \left\{ t\in M: Z(f(t))\geq u, \nabla Z|_{f(t)}=0, {\rm index}(\nabla^2 Z|_{f(t)})=i \right\}\\
&= \mu_i^Z(M', u),
\end{split}
\end{equation*}
yielding $\E[\mu_i^X(M, u)] = \E[\mu_i^Z(M', u)]$.

Notice that \eqref{eq:B} implies ${\rm det} (\nabla^2 X(t)) = |{\rm det}(B)|^2{\rm det} (\nabla^2 Z|_{f(t)})$. Therefore, by \eqref{eq:Palm distr Euclidean}, \eqref{eq:gradient} and \eqref{eq:Hessian},
\begin{equation*}
\begin{split}
F_i^X(t, u) &= \frac{\E\{|{\rm det} (\nabla^2 X(t))|\mathbbm{1}_{\{X(t)> u\}} \mathbbm{1}_{\{{\rm index}(\nabla^2 X(t))=i\}}|\nabla X(t)=0\}}{\E\{|{\rm det} (\nabla^2 X(t))|\mathbbm{1}_{\{{\rm index}(\nabla^2 X(t))=i\}} | \nabla X(t)=0\}}\\
&= \frac{|{\rm det}(B)|^2\E\{|{\rm det} (\nabla^2 Z|_{f(t)})|\mathbbm{1}_{\{Z(f(t))> u\}} \mathbbm{1}_{\{{\rm index}(\nabla^2 Z|_{f(t)})=i\}}|\nabla Z|_{f(t)}=0\}}{|{\rm det}(B)|^2\E\{|{\rm det} (\nabla^2 Z|_{f(t)})||\mathbbm{1}_{\{{\rm index}(\nabla^2 Z|_{f(t)})=i\}} | \nabla Z|_{f(t)}=0\}}\\
&= F_i^Z(t', u).
\end{split}
\end{equation*}
\end{proof}

\begin{remark}
	In certain situations, we only know the covariance, or equivalently the distribution, of $X(t)$. But there exist another Gaussian field $Z$ and some diffeomorphic transformation $f$ such that $X(t)$ and $Z(f(t))$ have the same distribution, that is $X(t) \overset{d}{=} Z(f(t))$. Then the results in \eqref{eq:main} still hold since the expected number and height distribution of critical points depend only on the distribution of the field.
\end{remark}

\section{Applications}
\subsection{Anisotropic Gaussian Random Fields on Euclidean Space}\label{sec:anisotropic}

Let $A$ be a nondegenerate $N\times N$ matrix, that is ${\rm det}(A)\neq 0$. An anisotropic Gaussian random field $\{X(t), t\in \R^N\}$ is defined as
\begin{equation}\label{eq:aniso}
X(t)=Z(At), \quad t\in \R^N,
\end{equation}
where $\{Z(t), t\in \R^N\}$ is a smooth, unit-variance, isotropic Gaussian random field. So, it is a special case of the diffeomorphic transformation $f: \R^N \to \R^N$ where $f(t)=At$. Due to the isotropy of $Z$, there exists a function $\rho: [0,\infty) \to \R$ such that the covariance of $Z$ has the form
\begin{equation}\label{eq:isotropy}
\E[Z(t)Z(s)] = \rho\left(\|t-s\|^2\right).
\end{equation}
By \eqref{eq:aniso} and \eqref{eq:isotropy}, equivalently, we may call $\{X(t), t\in \R^N\}$ an anisotropic Gaussian random field if the covariance has the form
\[
\E[X(t)X(s)] = \rho\left(\|A(t-s)\|^2\right).
\]

Using the isotropy property \eqref{eq:isotropy}, it has been shown in Cheng and Schwartzman \cite{CS15,CS18} that the exact formulae of the expected number and height distribution of critical points of the isotropic Gaussian field $Z$ can be obtained by using GOI random matrices. We show below that the study of critical points of anisotropic Gaussian fields can be transferred to isotropic Gaussian fields, so that their expected number and height distribution can be obtained exactly.

\begin{corollary}\label{cor:aniso}
	Let $\{X(t), t\in \R^N\}$ be a smooth, unit-variance, anisotropic Gaussian random field satisfying \eqref{eq:aniso} and let $D$ be an $N$-dimensional Jordan measurable set on $\R^N$. Then for $0\le i\le N$ and $u\in \R$,
	\begin{equation*}
	\E[\mu_i^X(D, u)] = |{\rm det}(A)|\E[\mu_i^Z(D, u)] \quad {\rm and} \quad F_i^X(u) = F_i^Z(u),
	\end{equation*}
	where $\E[\mu_i^Z(D, u)]$ and $F_i^Z(u)$ are respectively the expected number and height distribution for the isotropic Gaussian field $Z$ that can be found in \cite{CS15,CS18}; and $F_i^X(u):=F_i^X(t, u)$ and $F_i^Z(u):=F_i^Z(t, u)$ since they do not depend on $t$ due to the stationarity (isotropy) of $X$ and $Z$.
\end{corollary}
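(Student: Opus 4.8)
The plan is to specialize Theorem~\ref{thm:main} to the linear map $f(t)=At$ and then exploit the stationarity of $Z$ (and hence of $X$) to convert the statement on the transformed domain $AD$ into one on $D$ itself, at the cost of the Jacobian factor $|{\rm det}(A)|$.

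\textbf{Step 1 (transfer via the linear diffeomorphism).} The map $f(t)=At$ is a $C^\infty$ diffeomorphism of $\R^N$ with constant differential $df|_t=A$, so the matrix $B$ appearing in the proof of Theorem~\ref{thm:main} equals $A$ for every $t$. Running the same pointwise computation, in particular \eqref{eq:gradient} and \eqref{eq:Hessian}, shows that $t\in D$ is a critical point of index $i$ of $X$ if and only if $At\in AD$ is a critical point of index $i$ of $Z$, and in that case $X(t)=Z(At)$. Hence $\mu_i^X(D,u)=\mu_i^Z(AD,u)$ identically, so $\E[\mu_i^X(D,u)]=\E[\mu_i^Z(AD,u)]$, and likewise $F_i^X(t,u)=F_i^Z(At,u)$, just as in \eqref{eq:main}. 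Although Theorem~\ref{thm:main} is phrased for compact manifolds, only this pointwise critical-point correspondence and the Kac--Rice representation of $\E[\mu_i]$ are needed here, and both remain valid on an arbitrary Jordan measurable $D$; alternatively one exhausts $D$ by compact manifolds-with-boundary.

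\textbf{Step 2 (volume scaling and height distribution).} Since $Z$ is isotropic it is in particular stationary, so the Kac--Rice integrand for $\E[\mu_i^Z(\cdot,u)]$ is constant in the location variable; write $\E[\mu_i^Z(D',u)]=C_i^Z(u)\,{\rm vol}(D')$ for every Jordan measurable $D'$. Taking $D'=AD$ and using ${\rm vol}(AD)=|{\rm det}(A)|\,{\rm vol}(D)$ gives $\E[\mu_i^Z(AD,u)]=|{\rm det}(A)|\,C_i^Z(u)\,{\rm vol}(D)=|{\rm det}(A)|\,\E[\mu_i^Z(D,u)]$, which with Step~1 yields the first identity. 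For the height distribution, stationarity of $Z$ makes $F_i^Z(t',u)$ independent of $t'$, and $X(t)=Z(At)$ with $Z$ stationary is itself stationary, so $F_i^X(t,u)$ is independent of $t$; writing $F_i^Z(u)$ and $F_i^X(u)$ for these common values, Step~1 reads $F_i^X(u)=F_i^Z(u)$.

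The only genuine subtlety is the bookkeeping in Step~1: Theorem~\ref{thm:main} is stated for compact Riemannian manifolds, whereas the domain $D$ here is merely Jordan measurable, so one must check that the critical-point correspondence and the Kac--Rice formula survive the weaker hypothesis — which they do, since neither argument uses compactness or any boundary structure of the index set. Everything else is a one-line consequence of stationarity together with the change-of-variables formula ${\rm vol}(AD)=|{\rm det}(A)|\,{\rm vol}(D)$ for Lebesgue measure.
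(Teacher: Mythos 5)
Your proof is correct and follows essentially the same route as the paper: specialize Theorem~\ref{thm:main} to $f(t)=At$, then use stationarity (isotropy) of $Z$ to write $\E[\mu_i^Z(AD,u)]$ as a constant times ${\rm Vol}(AD)=|{\rm det}(A)|\,{\rm Vol}(D)$ and to drop the $t$-dependence of the height distributions. Your extra remark about $D$ being merely Jordan measurable rather than a compact manifold is a point the paper passes over silently, and your resolution of it is fine.
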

\begin{proof} Let $D' = \{At: t\in D\}$, then ${\rm Vol}(D') = |{\rm det}(A)|{\rm Vol}(D)$. It follows from Theorem \ref{thm:main} and the isotropy of $Z$ that
	\[
	\E[\mu_i^X(D, u)] = \E[\mu_i^Z(D', u)] = |{\rm det}(A)|{\rm Vol}(D)\E[\mu_i^Z([0,1]^N, u)]  = |{\rm det}(A)|\E[\mu_i^Z(D, u)]
	\]
	and
	\[
	F_i^X(u) = F_i^X(t, u) = F_i^Z(At, u) = F_i^Z(t, u) = F_i^Z(u).
	\]
\end{proof}	

\begin{remark}
	Corollary \ref{cor:aniso} shows that the expected number of critical points of the anisotropic field $X$ is that of the isotropic field $Z$ scaled by $|{\rm det}(A)|$, while the height distribution remains the same as that of $Z$. A special case when $A$ is a diagonal matrix was considered in \cite{Fyodorov15} with tedious computation.
	
	It can also be seen easily that the results in Corollary \ref{cor:aniso} hold in a more general setting where $Z$ is stationary but not necessarily isotropic.
\end{remark}

\subsection{Gaussian Random Fields on Ellipsoids}

Suppose $\{X(t), t\in M\}$ is a smooth Gaussian field on an $N$-dimensional ellipsoid $M$. If $M$ is embedded in $\R^{N+1}$, then there is a linear mapping on $\R^{N+1}$, denoted by $g$, that maps $M$ onto the $N$-dimensional unit sphere $\S^N$. Let $f: M \to \S^N$ be the $N$-dimensional diffeomorphic mapping between $M$ and $\S^N$ induced by $g$. Then we can write $X(t) = Z(f(t))$, where $\{Z(t'), t'\in \S^N\}$ is a Gaussian random field on the $N$-dimensional unit sphere $\S^N$. Now the study of critical points of $X$ on the ellipsoid $M$ can be transferred to the Gaussian field $Z$ on the unit sphere $\S^N$ by Theorem \ref{thm:main}.

In particular, if the Gaussian field $Z$ above is isotropic on $\S^N$, then by Theorem \ref{thm:main}, $\E[\mu_i^X(M, u)] = \E[\mu_i^Z(\S^N, u)]$ and $F_i^X(t, u) = F_i^Z(t', u)$ with $t'=f(t)$, whose formulae can be found in \cite{CS15,CS18}.

\section*{Acknowledgment}

The first author thanks Professor Yan Fyodorov for helpful discussions.

\begin{small}

\end{small}

\bigskip

\begin{quote}
\begin{small}

\textsc{Dan Cheng}\\
School of Mathematical and Statistical Sciences \\
Arizona State University\\
900 S. Palm Walk\\
Tempe, AZ 85281, U.S.A.\\
E-mail: \texttt{cheng.stats@gmail.com}

\vspace{.1in}
		
\textsc{Armin Schwartzman}\\
Division of Biostatistics and Bioinformatics and \\
Halicio\v{g}lu Data Science Institute \\
University of California San Diego\\
9500 Gilman Dr.\\
La Jolla, CA 92093, U.S.A.\\
E-mail: \texttt{armins@ucsd.edu}

	\end{small}
\end{quote}

\end{document}